\documentclass[a4paper,abstract=true]{scrartcl}
\usepackage[utf8]{inputenc}
\usepackage{amsfonts,amsthm,amssymb,amsmath}
\usepackage[inline]{enumitem}
\usepackage{url}
\usepackage[hidelinks]{hyperref}
\usepackage{color}
\usepackage{tikz}
\usetikzlibrary{calc}

\newtheorem{ctr}{}[section]
\newtheorem{theorem}[ctr]{Theorem}

\newtheorem{proposition}[ctr]{Proposition}
\newtheorem{corollary}[ctr]{Corollary}

\theoremstyle{remark}

\newcommand{\Aut}{\operatorname{Aut}}
\newcommand{\N}{\mathbb{N}}

\newcommand{\ord}{\operatorname{ord}}

\title{Edge Inversions in $(P_k)$-closed Groups}
\author{Kirwin Hampshire, Florian Lehner, and Andrew Wood}
\begin{document}

\maketitle
\begin{abstract}
We construct $(P_2)$-closed groups acting on $T_3$ in which all edge inversions have infinite order. This provides a negative answer to a question posed by Tornier. We also construct a family of $(P_2)$-closed groups for which the smallest order of an edge inversion is an arbitrarily high finite number. 
\end{abstract}

\section{Introduction}

Groups acting on trees play a foundational role in geometric group theory, for instance through Bass–Serre theory and Stallings' theorem about ends of groups. They are also central to the theory of totally disconnected, locally compact (t.d.l.c.) groups as many important examples of such groups come from group actions on trees. 

Tits \cite{tits70} constructed the first examples of simple, non-discrete t.d.l.c.\ groups by showing that the group of automorphisms preserving the bipartition of a regular tree is simple (or trivial). More generally, Tits showed that if a group satisfies a certain independence property, called property $(P)$, then the subgroup generated by edge stabilisers is simple. Several constructions producing groups with property $(P)$ are known, such as the Burger-Mozes universal group construction \cite{burgermozes2000}, and  the box-product construction introduced by Smith~\cite{smith2017} to obtain the first uncountable family of simple, non-discrete t.d.l.c.\ groups. It is worth mentioning that $(P)$-closed groups, that is, closed groups which have property $(P)$ can be fully understood via the local-to-global theory of local action diagrams introduced by Reid and Smith in \cite{reidsmith22}, a theory similar to Bass-Serre theory.

In this short note we focus on groups acting on trees with property $(P_k)$, a generalisation of property $(P)$ which was introduced by Banks, Elder, and Willis in \cite{bew14}, and analogously to property $(P)$ gives rise to simple t.d.l.c.\ groups. 

In \cite{tornier2021}, Tornier gives a construction for $(P_k)$-closed groups along the same lines as the Burger-Mozes universal group construction. Tornier's construction has the property that if the resulting group contains edge inversions (that is, automorphisms swapping the endpoints of an edge), then it contains edge inversions of order $2$. It is not difficult to show that for $(P)$-closed groups this is not a restriction: any $(P)$-closed group which contains an edge inversion must contain an edge inversion of order $2$. However, there are examples of $(P_2)$-closed groups in which the minimal order of an edge inversion is not $2$. Motivated by these observations, Tornier asked whether there is an upper bound on the minimal order of edge inversions in $(P_k)$-closed groups. 

We show that this is not the case by constructing examples of $(P_2)$-closed groups acting on the $3$-regular tree containing edge inversions, but no finite order edge inversions. We also show that there are vertex transitive examples of such groups acting on the $4$-regular tree, and examples of  $(P_2)$-closed groups which have finite order edge inversions but in which the least order of an edge inversion is arbitrarily large.

\section{Notation}

We denote the vertex set of a tree $T$ by $VT$ and the edge set by $ET$. Let $v \in VT$ for some tree $T$. The \emph{ball of radius $n$ centered at $v$} is the set of vertices $B(v,n) = \{w \in VT \mid d(w,v) \leq n\}$, and the \emph{sphere of radius $n$ centered at $v$} is the set of vertices $B(v,n) = \{w \in VT \mid d(w,v) = n\}$.  To extend these notions to edges, let $e \in ET$, let $v$ and $w$ be the two vertices joined by $e$, and let $T_v$ and $T_w$ be the half-trees obtained by removing $e$ which contain $v$ and $w$ respectively; we then define $B(e,n)= B(v,n) \cup B(w,n)$ and $S(e,n)=(S(v,n) \cap T_v) \cup (S(w,n)\cap T_w)$. 

Let $X \subseteq VT$ and $g \in \Aut(T)$, we denote by $g\mid_X$ the restriction of $g$ to the set of vertices $X$. If $g$ setwise stabilises $X$, then this is a permutation of $X$; if it does not stabilise $X$, then this is a map from $X$ to $VT$. We denote the order of $g \in \Aut(T)$ by $\ord(g)$. If $g(X) = X$, then $\ord(g\mid_X)$ denotes the order of the permutation $g$ induces on $X$, that is, the minimal $n$ such that $g^n$ fixes $X$ pointwise.

Let $G \leq \Aut(T)$. The $(P_k)$-closure of $G$, denoted by $G^{(P_k)}$ is defined by
\[
 G^{(P_k)} = \{h \in \Aut(T) \mid \forall v \in VT \, \exists g \in G \colon h\mid _{B(v,k)} = g\mid _{B(v,k)}\},
\]
in other words, the $(P_k)$-closure of $G$ consists of all automorphisms of $T$ whose actions on each $k$-ball coincide with the action of some $g \in G$ (but not necessarily the same $g$) for each $k$-ball. We call a group $(P_k)$-closed, if $G^{(P_k)} = G$. It can be shown that the $(P_k)$-closure of $G$ is the smallest $(P_k)$-closed group containing $G$. If $k = 1$, we write $(P)$-closure instead of $(P_1)$-closure.

\section{Results}

\subsection{A $(P_2)$-closed Group With No Finite Order Inversions}
The following result is well known. Since it is the main motivation for the results in this paper, we provide a short proof for the reader's convenience.

	\begin{proposition}
	\label{prop:1} If a $(P)$-closed group $G$ acting on a tree $T$ contains an edge inversion, then $G$ contains an edge inversion of order 2.
	\begin{proof}
		Let $e=(v,w)$ be an edge in $T$, let $g$ be an inversion of $e$, and let $T_v$ and $T_w$ be the connected components of $T-e$ containing $v$ and $w$ respectively. We define an element $x \in \text{Aut}(T)$ such that \[xu =
		\begin{cases}
			gu & \text{if } u \in VT_v, \\
			g^{-1}u & \text{if } u \in VT_w.
		\end{cases}\]
        Note that $x$ preserves adjacency on $T_v$ and $T_w$ by definition, and since $x(v)=w$ and $x(w)=v$ it preserves adjacency on all of $T$. Bijectivity follows from the fact that $gT_v=T_w$ and $g^{-1}T_w=T_v$, since both $g$ and $g^{-1}$ are bijective. Therefore $x \in \Aut(T)$.
        
        By definition, $x\mid_{T_v}=g\mid_{T_v}$ and $x\mid_{T_w}=g^{-1}\mid_{T_w}$. Since $g(v) = g^{-1}(v) = x(v)$ and $g(w) = g^{-1}(w) = x(w)$ we conclude that $x \in G^{(1)} = G$ since $G$ has property (P). 
        
        Finally, to see that $x^2$ is trivial, consider an arbitrary vertex $u \in VT_v$. Then $x^2u=xgu$ but $gu \in T_w$ so $x(gu)=g^{-1}(gu)=u$ and a symmetric argument holds for any vertex in $T_w$.
	\end{proof}
\end{proposition}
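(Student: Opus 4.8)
The plan is to take the given inversion $g$ of some edge $e = (v,w)$ and surgically modify it into an involution that still inverts $e$. Writing $T_v$ and $T_w$ for the two half-trees of $T - e$, the natural candidate is the automorphism $x$ that acts as $g$ on $T_v$ and as $g^{-1}$ on $T_w$. The intuition is that any inversion of $e$ necessarily carries $T_v$ onto $T_w$, so both $g$ and $g^{-1}$ exchange the two half-trees; following $g$ on one side and $g^{-1}$ on the other makes the two halves of the map mutually inverse, which should force $x^2$ to be trivial.

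First I would verify that $x$ is a genuine automorphism. Adjacency within each half-tree is preserved since $x$ restricts there to the automorphism $g$, respectively $g^{-1}$, and the single edge $e$ bridging the two halves is sent to itself because $x(v) = g(v) = w$ and $x(w) = g^{-1}(w) = v$. Bijectivity follows from the fact that $g$ maps $T_v$ bijectively onto $T_w$ while $g^{-1}$ maps $T_w$ bijectively onto $T_v$.

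Next I would invoke property $(P)$ to deduce $x \in G$. The crucial observation is that $g$ and $g^{-1}$ agree on the edge $e$: both send $v \mapsto w$ and $w \mapsto v$. Hence for every vertex $u$ the restriction $x\mid_{B(u,1)}$ coincides with one of $g, g^{-1}$ --- for $u$ strictly inside a half-tree the entire $1$-ball lies in that half-tree, while for $u \in \{v,w\}$ the agreement of $g$ and $g^{-1}$ on $e$ renders the choice harmless. Since $g, g^{-1} \in G$, this witnesses $x$ as a member of $G^{(P_1)} = G$.

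Finally I would compute the order. For $u \in VT_v$ we have $g(u) \in VT_w$, so $x^2(u) = x(g(u)) = g^{-1}(g(u)) = u$, and the symmetric computation settles $u \in VT_w$. Thus $x^2$ is trivial, and since $x(v) = w \neq v$ the automorphism $x$ is a nontrivial inversion of $e$, so $\ord(x) = 2$. The step I would treat with most care is the gluing: property $(P)$ is exactly what lets a hybrid automorphism assembled from two different group elements on the two sides be recognised as an element of $G$, and without it the argument collapses.
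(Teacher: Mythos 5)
Your proposal is correct and follows essentially the same route as the paper's own proof: the same hybrid automorphism $x$ acting as $g$ on $T_v$ and $g^{-1}$ on $T_w$, the same use of property $(P)$ via the agreement of $g$ and $g^{-1}$ on the inverted edge, and the same computation showing $x^2$ is trivial. Your explicit remark that $x(v)=w\neq v$ (so the order is exactly $2$, not $1$) is a small point the paper leaves implicit, but otherwise the arguments coincide.
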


Given that any $(P)$-closed group which contains an edge inversion must also contain an edge inversion of order 2, one might reasonably conjecture that something similar must also be the case for $(P_k)$-closed groups. Corollary \ref{cor:nofiniteorderinversion} below shows that this is not even true for $(P_2)$-closed groups. 

Call an automorphism $g$ of $T_3$ a \emph{good inversion} if it maps $e$ to itself and acts as a cyclic permutation of order $2^n$ on $S(e,n)$ for every $n \in \N$; it is not hard to verify that such an automorphism exists, and that it necessarily swaps the endpoints of $e$.

\begin{theorem}
	\label{thm:2} Let $g \in \text{Aut}(T_3)$ be a good inversion. Then $G=\langle g \rangle^{(P_k)}$ has no elements of order $2$ for $k\geq 2$.
\end{theorem}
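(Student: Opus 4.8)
The plan is to rule out order-$2$ elements directly. Since $G^{(P_{k+1})} \subseteq G^{(P_k)}$, the $(P_k)$-closures are nested, so it suffices to treat $k = 2$; thus I write $G = \langle g \rangle^{(P_2)}$, and I use that $G$ is a group (so $g^{\pm 1}h, hg^{\pm 1} \in G$ for $h \in G$). Any finite-order automorphism of a tree is either an inversion of some edge $f$ or an involution fixing some vertex $p$, and I would dispose of the two cases separately. Throughout, $h \in G$ means that for every vertex $u$ there is an integer $m(u)$ with $h|_{B(u,2)} = g^{m(u)}|_{B(u,2)}$; and since $g$ has order $2^n$ on $S(e,n)$, the power $g^m$ has order $2^n/\gcd(2^n,m)$ there, so every odd power of $g$ retains the full order $2^n$ on each sphere. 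Note that $G$ certainly does contain inversions of infinite order (for instance $g$ and its odd powers), so the content is to exclude order $2$ only.

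First I would locate where inversions can act. Every even power of $g$ fixes $v$ and $w$ and preserves each sphere $S(e,n)$, hence cannot swap the endpoints of any edge, since for $f \neq e$ these lie in consecutive spheres; and odd powers invert precisely $e$. Consequently, if $h \in G$ inverts an edge $f$ and $u$ is an endpoint of $f$, then $g^{m(u)}$ must also invert $f$, forcing $f = e$; and since $h$ then swaps the two sides of $e$, at every vertex $g^{m(u)}$ swaps sides, so every exponent $m(u)$ is odd. Now fix $n \geq 2$ and consider $S(e,n)$, on which $g$ acts as two $2^n$-cycles $C, C'$, each alternating between the two half-trees. For $u \in S(e,n)$ we have $h(u) = g^{m(u)}(u)$, which lies in the $g$-cycle of $u$; hence $h$ preserves each of $C$ and $C'$, and identifying such a cycle with $\Z/2^n$ on which $g$ acts as $+1$, the relation $h^2 = 1$ gives $m(h(u)) \equiv -m(u)$.

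The heart of the argument, and the step I expect to be hardest, is to rule out this configuration using the interaction \emph{between} consecutive spheres. On a single sphere the constraints are satisfiable — for example on $\Z/4$ the map sending one pair by $i \mapsto i+1$ and the other by $i \mapsto i+3$ is a fixed-point-free involution with only odd local shifts — so no contradiction can arise from one level alone. It must come from the recursive carry structure of $g$, namely that $g^2$ restricts on each half-tree to an odometer-type automorphism linking level $n$ to level $n+1$. Concretely, the overlaps of the balls $B(u,2)$ along a ray force the residues of $m(u)$ to refine coherently, so that $h$ determines for each end $\xi$ a $2$-adic parameter $m_\xi \in \Z_2$; the involution relation reads $m_{h\xi} = -m_\xi$, while the portrait-compatibility between a vertex and its two children controls how $m_\xi$ may vary as $\xi$ branches. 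I would then argue, by induction on $n$ using this compatibility, that the shifts cannot be chosen consistently for all $n$: matching odd powers of $g$ on every $2$-ball propagates the full order $2^n$ of $g$ onto $h|_{S(e,n)}$, which for $n \geq 2$ is incompatible with $h|_{S(e,n)}$ being an involution. Making this propagation rigorous — tracking exactly how much of the cyclic action a $2$-ball determines and how the carries accumulate across levels — is the main obstacle.

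Finally, for an order-$2$ element $h$ fixing a vertex $p$, I would localise to a vertex $u'$ that is fixed by $h$ but has a child it moves. Matching with $g^{m(u')}$ forces this power to fix $u'$ while moving a neighbour, so $d(u',e) = v_2(m(u'))$, whence $g^{m(u')}$ acts with order $2$ on the first sphere beyond $u'$ but with strictly larger order further out. Reading $h$ on the subtree hanging below the moved child then reduces, by the same self-similar descent as in the inversion case, to the growth-of-order phenomenon above, contradicting $h^2 = 1$. Thus $G$ has no element of order $2$, and by the nesting of closures the same holds for every $k \geq 2$, which is the assertion of Theorem \ref{thm:2}.
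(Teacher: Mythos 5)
Your reduction to $k=2$ via nesting of closures is fine, and so is the localization argument showing that any edge inverted by an element of $G$ must be $e$ itself. But the proposal has a fatal structural error: you take $g$ to act on $S(e,n)$ as \emph{two} $2^n$-cycles alternating between the half-trees, whereas a good inversion acts as a \emph{single} cycle of length $2^{n+1}$ on each sphere $S(e,n)$. This is the reading the paper's own proof uses (it invokes ``a single cyclic permutation on all $2^{n+2}$ of the vertices in $S(e,n+1)$''), it matches the hypothesis $\ord(g\mid_{S(e,n)}) = 2^{n+1}$ of Theorem \ref{thm:3}, and it is exactly why a good inversion ``necessarily swaps the endpoints of $e$'' (a single long cycle cannot preserve the two half-spheres). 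The distinction is not cosmetic, because under your two-cycle reading the theorem is \emph{false}: if $g$ acts on $S(e,1)$ as $(a\,c)(b\,d)$, with $a,b$ the children of $v$ and $c,d$ the children of $w$, then $g$ and $g^{-1}$ agree on $B(e,1)$, so the patched map of Proposition \ref{prop:1} ($x=g$ on $T_v$, $x=g^{-1}$ on $T_w$) agrees with $g^{\pm1}$ on every $2$-ball, hence lies in $\langle g\rangle^{(P_2)}$ --- and it is an edge inversion of order $2$. So no argument built on the two-cycle structure can succeed; in particular, your claim that ``on a single sphere the constraints are satisfiable,'' with the $\Z/4$ example of an odd-shift involution, is an artifact of the misreading. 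With the correct structure the inversion case is one line: if $h$ inverts $e$, then $h\mid_{B(v,2)} = g^p\mid_{B(v,2)}$ with $p$ odd, and every odd power of $g$ is a $4$-cycle on $S(e,1) \subseteq B(v,2)$, so $h$ cannot have order $2$.

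Second, even granting your setup, there is no proof here: you explicitly leave the central step open (``making this propagation rigorous \dots\ is the main obstacle''), and the vertex-fixing case is deferred to that same missing argument. The reason you are driven to $2$-adic end-parameters is that after introducing the ball-wise data $h\mid_{B(u,2)} = g^{m(u)}\mid_{B(u,2)}$, you exploit only its pointwise shadow $h(u) = g^{m(u)}(u)$ on each sphere, which genuinely does admit spurious solutions. The paper instead spends the full strength of the $(P_2)$-condition on one well-chosen ball, and the contradiction appears immediately. Take $n$ minimal such that $h$ moves a point of $S(e,n)$ (such $n$ exists since $h$ preserves $e$ and, by the above, does not invert it); by minimality $h$ fixes $S(e,n-1)$ pointwise, and since $h^2=1$ it acts involutively on $S(e,n)$, so some $u \in S(e,n-1)$ has its two children $v, v'$ in $S(e,n)$ swapped by $h$. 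If $h\mid_{B(u,2)} = g^p\mid_{B(u,2)}$, then $g^p$ swaps $v,v'$ and acts as a fixed-point-free involution on the four grandchildren of $u$ in $S(e,n+1)$; since $g$ is a single $2^{n+2}$-cycle on $S(e,n+1)$, this forces $p \equiv 2^{n+1} \pmod{2^{n+2}}$, hence $2^{n+1} \mid p$, hence $g^p$ fixes $S(e,n)$ pointwise --- contradicting that it swaps $v$ and $v'$. The whole contradiction lives inside a single $2$-ball spanning two consecutive spheres; no coherence of parameters across levels, and no case analysis over ends, is needed.
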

	\begin{proof}
		Suppose for a contradiction that $h \in G$ were an element of order $2$. It follows that $h$ does not invert $e$, since any inversion of $e$ in $G$ must be order $4$ on $S(e,1)$. Since $h$ fixes $e$ but $h\neq1_G$, there must be some smallest $n$ such that $\text{ord}(h\mid_{S(e,n)})=2$. We know that $\text{ord}(h\mid_{S(e,m)})\leq\text{ord}(h\mid_{S(e,m+1)})$ and so $\text{ord}(h\mid_{S(e,m)})=2$ for all $m\geq n$. 
        
        Let $u$ be a vertex of $S(e,n-1)$. By the minimality of $n$, we have $hu=u$. Choose $u$ such that $h$ swaps the two neighbours $v,v^{\prime}$ of $u$ in $S(e,n)$; note that this is possible because $\text{ord}(h\mid_{S(e,n)})=2$. Let $w_1,w_2$, and $w_1^{\prime},w_2^{\prime}$ be the neighbours of $v$ and $v'$ in $S(e,n+1)$, respectively. We can see that $\text{ord}(h\mid_{B(u,2)})=2$ since $h$ swaps $v$ and $v^{\prime}$ and must likewise act involutively on $w_1,w_2,w_1^{\prime},w_2^{\prime}$. 
        
        By definition $h\mid_{B(u,2)}=g^p\mid_{B(u,2)}$ for some integer $p$. Therefore, $g^p$ is involutive on $S(e,n+1)\cap B(u,2)$. Since $\langle g \rangle$ induces a single cyclic permutation on all $2^{n+2}$ of the vertices in $S(e,n+1)$ it follows that $p \equiv 2^{n+1} \mod 2^{n+2}$. Hence, $p \equiv 0 \mod  2^{n+1}$ which implies that $g^p$ fixes $S(e,n)$. But this contradicts that $g^p$ has order $2$ on $B(u,2)$.
	\end{proof} 

\begin{corollary}
    \label{cor:nofiniteorderinversion}
    The $(P_k)$-closure of a group generated by a good inversion contains no finite order inversion.
\end{corollary}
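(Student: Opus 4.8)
The plan is to derive this directly from Theorem \ref{thm:2}, using only the elementary observation that a finite-order edge inversion must have even order. I would first record that the relevant range is $k \geq 2$, matching the hypothesis of Theorem \ref{thm:2}; for $k = 1$ the statement would in fact fail, since by Proposition \ref{prop:1} the $(P)$-closure of a group containing the inversion $g$ must contain an inversion of order $2$.

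So I would argue by contradiction: suppose $G = \langle g \rangle^{(P_k)}$ with $k \geq 2$ contains an inversion $h$ of finite order $N$, where $h$ swaps the two endpoints $v, w$ of some edge. Since $h(v) = w$ and $h(w) = v$, the restriction $h\mid_{\{v,w\}}$ is a transposition, so $\ord(h\mid_{\{v,w\}}) = 2$. This order divides $\ord(h) = N$, because $h^N$ fixes $\{v,w\}$ pointwise; hence $N$ is even.

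I would then pass to the element $h^{N/2}$. As $N/2$ divides $N$, this element has order exactly $N/\gcd(N, N/2) = 2$, and since $G$ is a group with $h \in G$ we have $h^{N/2} \in G$. Thus $G$ contains an element of order $2$, directly contradicting Theorem \ref{thm:2}. The only step that requires a moment's thought is the evenness of $N$; everything else is an immediate consequence of the theorem, so I do not expect any substantial obstacle here — the corollary is essentially a one-paragraph unpacking of Theorem \ref{thm:2} combined with the parity of inversion orders.
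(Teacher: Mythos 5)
Your proof is correct and is essentially identical to the paper's own argument: both observe that a finite-order inversion must have even order (you justify this via the transposition on the inverted edge's endpoints) and then obtain an element of order $2$ as a power, contradicting Theorem \ref{thm:2}. Your added remark that the statement implicitly requires $k \geq 2$ (and fails for $k=1$ by Proposition \ref{prop:1}) is a correct and worthwhile clarification, but does not change the approach.
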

\begin{proof}
    A finite order inversion $x$ would have even order $2k$. But then $x^k$ would be an element of order $2$.
\end{proof}

\begin{corollary}
    The $(P_k)$-closure of a group generated by a good inversion is torsion free.
\end{corollary}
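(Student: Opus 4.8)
The plan is to reduce torsion-freeness to the non-existence of elements of prime order, and then dispatch the two cases separately: the prime $2$ is already handled by Theorem~\ref{thm:2}, while for odd primes I will exploit the binary branching of $T_3$ away from $e$. Throughout write $e=(v,w)$ and let $T_v,T_w$ be the two half-trees. The first observation I would establish at the outset is that \emph{every} element of $G$ stabilises $e$ setwise: if $h\in G$ then $h\mid_{B(v,k)}=g^p\mid_{B(v,k)}$ for some integer $p$, and since $g$ (being an inversion of $e$) and all its powers stabilise $e$, while $w\in B(v,k)$, we get $\{h(v),h(w)\}=\{g^p(v),g^p(w)\}=\{v,w\}$.

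Now suppose $G$ has a nontrivial element $h$ of finite order $m>1$ and pick a prime $p\mid m$; then $h^{m/p}$ is a nontrivial element of $G$ of order exactly $p$. So it suffices to rule out elements of prime order. If $p=2$ this contradicts Theorem~\ref{thm:2} directly. The remaining, genuinely new, case is that of an element $h$ of odd prime order $p$. Since $h$ stabilises $e$ and has odd order, it cannot swap $v$ and $w$ (that would be an order-$2$ action on $\{v,w\}$, impossible inside a group of odd order), so $h$ fixes both $v$ and $w$, and hence preserves each sphere $S(e,n)$.

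The crux is then an induction on $n$ showing that $h$ fixes $S(e,n)$ pointwise for every $n$, forcing $h=\mathrm{id}$. The base case $S(e,0)=\{v,w\}$ holds by the previous paragraph. For the inductive step, suppose $h$ fixes $S(e,n)$ pointwise; each vertex $u\in S(e,n)$ has exactly two children in $S(e,n+1)$ because $T_3$ is $3$-regular, and $h$ permutes this pair since it fixes $u$. The order of $h$ on a $2$-element set is $1$ or $2$, but $2\nmid p$, so $h$ must fix both children; thus $h$ fixes $S(e,n+1)$ pointwise. This contradicts $\ord(h)=p>1$, completing the odd case and hence the proof.

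I expect the main obstacle to be conceptual rather than computational: recognising that Theorem~\ref{thm:2} only controls the prime $2$, so a separate mechanism is needed for odd primes, and that the right mechanism is the observation that an odd-order automorphism can never act nontrivially on a set of size $2$. This is precisely where the regularity $3$ of the tree is used, and it explains why the analogous statement can fail on $T_4$, where a vertex has three children and an odd-order (for instance $3$-cycle) action becomes available.
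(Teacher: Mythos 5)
Your proof is correct, and it rests on the same two essential ingredients as the paper's — Theorem~\ref{thm:2} and the observation that in $T_3$ each vertex of $S(e,n)$ has exactly two children in $S(e,n+1)$, so that any nontrivial action on a sibling pair is a swap — but it packages them differently. The paper splits into inversions and non-inversions: finite-order inversions are excluded by Corollary~\ref{cor:nofiniteorderinversion}, and for a nontrivial non-inversion $x$ it takes the largest ball $B(e,n)$ fixed pointwise, notes that $x$ must swap some sibling pair in $B(e,n+1)$, that every odd power of $x$ still swaps that pair, hence $\ord(x)$ is even; in either case an appropriate power of $x$ has order $2$, contradicting Theorem~\ref{thm:2}. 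You instead reduce to elements of prime order, dispatch $p=2$ by Theorem~\ref{thm:2}, and for odd $p$ run the sibling argument in contrapositive form: an odd-order element can never swap a sibling pair, so by induction over the spheres $S(e,n)$ it fixes everything and is trivial. These are two packagings of the same mechanism, so neither is more general; yours has the mild advantage of making explicit two points the paper leaves implicit (that every element of the closure stabilises $e$ setwise, since its action on $B(v,k)$ with $k\geq 1$ agrees with a power of $g$, and exactly where $3$-regularity is used), while the paper's version avoids the prime-factorisation step by producing an order-$2$ element directly. Both arguments are complete.
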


\begin{proof}
    If $x$ is a finite order element which is not an inversion, then there is some $n$ so that $x$ pointwise fixes $B(e,n)$, but not $B(e,n+1)$. Consequently, $x$ swaps some pair of vertices in $B(e,n+1)$ and thus the same is true for every odd power of $x$. It follows that $x$ has even order $2k$. But then $x^k$ would be an element of order $2$. 
\end{proof}

\subsection{Inversion Orders in $(P_2)$-closures}

In this section, we take a closer look at the $(P_2)$-closures of groups generated by a single infinite order edge inversion of $T_3$. Our main result is the following theorem which allows us to exactly determine the minimal order of an edge inversion in such a group.

\begin{theorem}
    \label{thm:3}
    Let $g$ be an infinite order inversion of an edge $e$ of $T:=T_3$. Let $N \in \N$ and assume that $\ord(g\mid_{S(e,n)}) = 2^{n+1}$ for every $n < N$.
    \begin{enumerate}
        \item Every inversion $h \in \langle g \rangle^{(P_2)}$ has order at least $2^N$.
        \item If $\ord(g\mid_{S(e,N)}) = 2^N$, then $\langle g \rangle^{(P_2)}$ contains an inversion of order exactly $2^N$.
    \end{enumerate}
\end{theorem}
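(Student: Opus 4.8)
The plan is to analyse an inversion through its action on the spheres $S(e,n)$, comparing it ball-by-ball with powers of $g$. I will concentrate on inversions of $e$ itself; a general inverted edge is handled by running the same sphere analysis centred at that edge. The structural input is that for $n<N$ the hypothesis $\ord(g\mid_{S(e,n)})=2^{n+1}=|S(e,n)|$ forces $g\mid_{S(e,n)}$ to be a single $2^{n+1}$-cycle, so I identify $S(e,n)\cong\Z/2^{n+1}\Z$ with $g$ acting as $x\mapsto x+1$. Under this identification the parent map $S(e,n+1)\to S(e,n)$ is reduction modulo $2^{n+1}$, two vertices are siblings iff they differ by $2^n$, and $g^{p}$ swaps a sibling pair iff $p\equiv2^{n}\pmod{2^{n+1}}$. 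Since any inversion $h$ commutes with the parent map, $\ord(h\mid_{S(e,n)})$ divides $\ord(h\mid_{S(e,n+1)})$ with ratio $1$ or $2$, and $\ord(h\mid_{S(e,0)})=2$; hence every $\ord(h\mid_{S(e,n)})$ is a power of $2$ and $\ord(h)=\sup_n\ord(h\mid_{S(e,n)})$.

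For Part (1) I prove by induction that $h\mid_{S(e,n)}$ is a single $2^{n+1}$-cycle for $n\le N-1$, which gives $\ord(h)\ge\ord(h\mid_{S(e,N-1)})=2^N$. The cases $n=0,1$ are immediate (for $n=1$ the ball $B(v,2)$ already contains all of $S(e,1)$, which $h$ permutes as the $4$-cycle $g^{q}$ with $q$ odd). For the inductive step, with $1\le n\le N-2$ so coordinates are available through level $n+1$, I argue as in the proof of Theorem~\ref{thm:2}: assuming $h\mid_{S(e,n)}$ is a single cycle while $h\mid_{S(e,n+1)}$ is not, the map $f:=h^{2^{n}}$ fixes $S(e,n-1)$ pointwise, is an involution on $S(e,n)$, and satisfies $f^{2}=h^{2^{n+1}}=\mathrm{id}$ on $S(e,n+1)$. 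Choosing $u\in S(e,n-1)$ whose two children $f$ swaps and writing $f\mid_{B(u,2)}=g^{q}\mid_{B(u,2)}$, the conditions that $f$ fixes $u$ and swaps its children force $q\equiv2^{n}\pmod{2^{n+1}}$, so $q/2^{n}$ is odd. Reading off $g^{q}\colon x\mapsto x+q$ on the four grandchildren of $u$ (the coset $u+2^{n}\Z/2^{n+2}\Z$) then shows $f$ acts there as a $4$-cycle, whence $f^{2}\ne\mathrm{id}$ on $S(e,n+1)$, a contradiction. The computation is routine once the coordinates are fixed; the only care needed is matching the radius-$2$ ball of a level-$(n-1)$ vertex with its grandchildren at level $n+1$.

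For Part (2) I construct an inversion of order exactly $2^{N}$ and appeal to Part (1) for the matching lower bound. Since $\ord(g\mid_{S(e,N)})=2^{N}$, the element $g^{2^{N}}$ fixes $B(e,N)$ pointwise while $g$ has infinite order, and $g$ induces two $2^{N}$-cycles on $S(e,N)$. Writing one such cycle as $x_0\to x_1\to\cdots\to x_{2^{N}-1}\to x_0$ with $g(x_i)=x_{i+1}$ and letting $T_{x_i}$ be the half-tree below $x_i$, the first idea is to set $h=g$ on $B(e,N)$ and on each $T_{x_i}$ except at the wrap-around, where $h\mid_{T_{x_{2^{N}-1}}}=g^{1-2^{N}}$; this makes $h^{2^{N}}$ telescope to the identity on each $T_{x_0}$, and $h^{2^{N}}=\mathrm{id}$ on $B(e,N)$ because $g^{2^{N}}$ fixes it. Performing this on both cycles yields an inversion with $h^{2^{N}}=\mathrm{id}$.

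The main obstacle is verifying $h\in\langle g\rangle^{(P_2)}$, namely that on every ball $B(y,2)$ it agrees with a single power of $g$. This holds automatically except on balls straddling an interface edge $(x_i^{-},x_i)$, where $h$ jumps by $g^{2^{N}}$; since $g^{2^{N}}$ fixes $B(e,N)$ but need \emph{not} fix level $N+1$, the naive single cut fails exactly when $g$ doubles from level $N$ to level $N+1$. I will resolve this by not concentrating the correction at one edge: descending level by level below $N$, at each level I compose with a correction that is a locally trivial power of $g$ and that cancels the child-swaps introduced by the deeper doublings of $g$, pushing the residual defect arbitrarily deep. Each fixed ball is affected by only finitely many corrections, so the limit is a well-defined automorphism that is locally a power of $g$ everywhere and satisfies $h^{2^{N}}=\mathrm{id}$; combined with Part (1) this gives order exactly $2^{N}$. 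The delicate point, and where I expect the real work to lie, is checking that the local freedom — the power of $g$ is determined only modulo the order of $g$ on each ball — always suffices to absorb a doubling without disturbing the already-stabilised upper levels.
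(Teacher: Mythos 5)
Your Part~(1) is correct and follows essentially the paper's own argument: coordinatise $S(e,n)$ as $\Z/2^{n+1}\Z$ via the single cycle of $g$, pass to the involution $h^{2^n}$, compare it with some $g^q$ on a $2$-ball around a suitable $u$, deduce $q\equiv 2^n \pmod{2^{n+1}}$, and derive a contradiction (you read the contradiction off on the grandchildren of $u$, the paper one level higher; the two versions are interchangeable). One caveat: your one-line reduction of a general inversion to an inversion of $e$ (``run the same sphere analysis centred at that edge'') does not work as stated, because the doubling hypothesis on $g$ is available only for spheres around $e$. The correct observation is that $\langle g\rangle^{(P_2)}$ contains no inversion of any edge $e'\neq e$ at all: such an element would agree, on a $2$-ball around an endpoint of $e'$, with a power of $g$ swapping the endpoints of $e'$, but no power of $g$ inverts any edge other than $e$ (an automorphism of a tree fixing two points fixes the path between them pointwise; apply this to the midpoint of $e'$ together with the midpoint of $e$ for odd powers, or with an endpoint of $e$ for even powers). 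The paper passes over this point silently as well.

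The genuine gap is in Part~(2), and you diagnosed it yourself. With the cut placed at the half-trees below $S(e,N)$ and one wrap-around vertex $x_{2^N-1}$ in each of the two $g$-cycles, your $h$ need not lie in $\langle g\rangle^{(P_2)}$. Concretely, centre a $2$-ball at the parent $x^-\in S(e,N-1)$ of $x_{2^N-1}$ and let $s$ be the sibling of $x_{2^N-1}$; if $s$ is not the other wrap-around vertex, this ball contains the children of $x_{2^N-1}$, on which $h=g^{1-2^N}$, and the children of $s$, on which $h=g$, all lying in $S(e,N+1)$. Matching $h$ at $x^-$ forces the exponent to be $p=1+k2^N$; if $g^{2^N}$ swaps both of these sibling pairs in $S(e,N+1)$ --- which the hypotheses do not exclude, as they say nothing about $\ord(g\mid_{S(e,N+1)})$ --- then matching on the children of $s$ forces $k$ even while matching on the children of $x_{2^N-1}$ forces $k$ odd, so no single power of $g$ agrees with $h$ on this ball. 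Your proposed repair by an infinite cascade of ever-deeper corrections is exactly the step you leave open (``where I expect the real work to lie''), so Part~(2) is not established.

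No limiting process is needed: the fix is to move the cut up one level, which is what the paper does. Let $T_i$ be the components of $T-B(e,N-2)$, i.e.\ the half-trees at the vertices $v_i$ of $S(e,N-1)$, on which $g$ acts as a single $2^N$-cycle, and set $x=g^{1-2^N}$ on $T_0$ and $x=g$ elsewhere. Every path leaving $T_0$ passes through $v_0$ and then its parent in $S(e,N-2)$, so a $2$-ball centred in $T_0$ meets the complement of $T_0$ only inside $B(e,N-1)$, while a $2$-ball centred outside $T_0$ meets $T_0$ only inside $B(e,N)$. Since $\ord(g\mid_{S(e,N)})=2^N$ implies that $g^{2^N}$ fixes $B(e,N)$ pointwise, the maps $g$ and $g^{1-2^N}$ agree on $B(e,N)$, so every straddling ball agrees with the single power $g^{1-2^N}$ (respectively $g$), and your telescoping computation of the order then goes through verbatim. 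Equivalently, within your own setup it suffices to choose the two wrap-around vertices to be siblings --- each vertex of $S(e,N-1)$ has exactly one child in each of the two $g$-cycles on $S(e,N)$ --- which makes your $h$ literally equal to the $x$ just described, since the two definitions differ only at $v_0$, where $g$ and $g^{1-2^N}$ agree.
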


\begin{proof}
    For the first part, we show by induction on $n$ that $\ord(h\mid_{S(e,n)}) = 2^{n+1}$ for every $n < N$. If $n = 0$, this is trivially true. If $n=1$ and $N>1$, then $g$ has order $4$ on $S(e,1)$, and thus any inversion of $e$ in $\langle g \rangle^{(P_2)}$ has order $4$ on $S(e,1)$.
    
    So we may assume that $n \geq 2$. In this case, $h' := h^{2^{n-1}}$ has order $2$ on $S(e,n-1)$: note that $h'$ pointwise fixes $S(e,n-2)$, but must act non-trivially on $S(e,n-1)$ by induction hypothesis. Since $h' \in \langle g \rangle^{(P_2)}$, for every $u \in S(e,n-2)$ there is some $p$ such that $h' \mid_{B(u,2)} = g^p \mid_{B(u,2)}$. Choose $u$ so that $h'$ swaps the neighbours of $u$ in $S(e,n-2)$. If $\ord(h\mid_{S(e,n)}) = 2^{n}$, then an analogous argument to the proof of Theorem \ref{thm:2} shows that $p \equiv 2^n \mod 2^{n+1}$, contradicting the fact that $h'$ swaps the neighbours of $u$. Therefore $\ord(h\mid_{S(e,n)}) > 2^{n}$, and thus $\ord(h\mid_{S(e,n)}) = 2^{n+1}$ as claimed.

    For the second part, we argue similarly to the proof of Proposition \ref{prop:1}. Note that $g^{2^N}$ pointwise fixes $S(e,n)$, and thus the actions of $g$ and $g^{1-2^N}$ on $S(e,n)$ coincide. 
    
    Enumerate the vertices of $S(e,N-1)$ as $(v_i)_{0 \leq i < 2^{N}}$ and assume that $g(v_i) = v_{i+1}$. Let $T_i$ be the connected component of $T - B(e,N-2)$ containing $v_i$. Define an automorphism $x$ of $T$ by
    \[
        x(v) =
        \begin{cases}
            g^{1-2^N}(v) & \text{if }v \in T_0\\ 
            g(v) & \text{otherwise.}
        \end{cases}
    \]
    As in the proof of Proposition \ref{prop:1}, it is easy to see that this indeed defines an automorphism of $T$. 
    
    Next we note that the restriction of $x$ to a ball of radius 2 around a vertex coincides with the restriction of $g$ or $g^{1-2^N}$. If the ball is entirely contained in $T_0$ or does not intersect $T_0$ at all, this is trivial. Otherwise it follows from the fact that the actions of $g$ and $g^{1-2^N}$ on $S(e,n)$ (and thus also on $B(e,n)$ coincide.

    Finally, we compute the order of $x$. Clearly, $\ord(x\mid_{B(e,N)}) = \ord(g\mid_{B(e,N)}) = 2^N$. If $v \notin B(e,N)$ then $v$ is contained in some $T_i$. We note that $xg^j(v) = g^{j+1}(v)$ if $g^j(v) \notin T_0$ and $xg^j(v) = g^{j+1-2^N}(v)$ if $g^{j}(v) \in T_0$ . Since $g$ maps $T_i$ to $T_{i+1}$ we know that $(g^j(v))_{0 \leq j < 2^N}$ contains exactly one vertex in each $T_i$, and thus $x^{2^N}(v) = g^{2^{N}-2^N}(v) = v$. So $\ord(g\mid_{T - B(e,N)}) \leq 2^N$, and thus the order of $x$ is $2^N$ as claimed.
\end{proof}

As an immediate corollary we obtain the following strengthening of Corollary \ref{cor:nofiniteorderinversion}.

\begin{corollary}
    Let $g$ be a good inversion. Then every inversion in $\langle g \rangle^{(P_2)}$ is a good inversion.
\end{corollary}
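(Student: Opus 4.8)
The plan is to deduce the corollary directly from the proof of Theorem~\ref{thm:3}(1), which already contains essentially all the work; the main task is to run that argument with $N$ arbitrarily large. First I would pin down which edge an inversion $h \in \langle g \rangle^{(P_2)}$ can invert. Since $g$ swaps the endpoints $v,w$ of $e$, every element of $\langle g \rangle^{(P_2)}$ agrees on $B(v,2)$ and on $B(w,2)$ with suitable powers of $g$, and each such power either fixes or swaps $v$ and $w$; by injectivity $h$ fixes $\{v,w\}$ setwise. As an inversion fixes no vertex, $h$ cannot fix both $v$ and $w$ and must therefore swap them, so $h$ inverts precisely $e$. This lets me speak unambiguously of the action of $h$ on the spheres $S(e,n)$.

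Next I would exploit that a good inversion acts as a single cyclic permutation on each $S(e,n)$, i.e.\ with $\ord(g\mid_{S(e,n)}) = |S(e,n)| = 2^{n+1}$ for \emph{every} $n \in \N$. In particular the hypothesis of Theorem~\ref{thm:3} is met for arbitrarily large $N$. The induction carried out in the proof of Theorem~\ref{thm:3}(1) establishes not merely that $\ord(h) \geq 2^N$, but the sharper per-sphere conclusion $\ord(h\mid_{S(e,n)}) = 2^{n+1}$ for all $n < N$. Since this holds for every $N$, letting $N \to \infty$ yields $\ord(h\mid_{S(e,n)}) = 2^{n+1}$ for every $n$.

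Finally I would translate this numerical statement back into the structural one. A permutation of the $2^{n+1}$-element set $S(e,n)$ can have order $2^{n+1}$ only if it is a single $2^{n+1}$-cycle: the order is the least common multiple of the cycle lengths, so some cycle must have length divisible by $2^{n+1}$, and as the cycle lengths sum to $2^{n+1}$ this forces one cycle covering the whole sphere. Hence $h$ acts cyclically on each $S(e,n)$; having unbounded orders on the spheres it also has infinite order, which is exactly the definition of a good inversion.

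I expect the only genuine subtlety to be bookkeeping: I am invoking the \emph{proof} of Theorem~\ref{thm:3}(1) rather than its weaker stated conclusion, so I should make clear that that argument actually produces $\ord(h\mid_{S(e,n)}) = 2^{n+1}$ and not just a lower bound on $\ord(h)$. The equivalence between ``order $2^{n+1}$ on a set of size $2^{n+1}$'' and ``single $2^{n+1}$-cycle'' is routine once spelled out. All the conceptual content already lives in Theorem~\ref{thm:3}, so after these observations the corollary is immediate.
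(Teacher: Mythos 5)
Your proposal is correct and follows the paper's intended route: the paper states this as an immediate consequence of Theorem~\ref{thm:3}, and you carry out exactly that derivation, correctly noting that one must invoke the per-sphere conclusion $\ord(h\mid_{S(e,n)}) = 2^{n+1}$ established inside the proof of part~(1) (valid for all $N$ since a good inversion satisfies the hypothesis for every $N$) rather than the weaker stated bound. Your added details --- that any inversion in the group must invert $e$ itself, and that order $2^{n+1}$ on the $2^{n+1}$-element sphere forces a single cycle --- are precisely the bookkeeping the paper leaves implicit.
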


We can also conclude that there exist $(P_2)$-closed subgroups of $\Aut(T_3)$ which contain finite order edge inversions of arbitrarily high order, and none of smaller order. 

\begin{corollary}
	\label{Cor:1} For every $m \in \mathbb{N}$ there exists a $g_m \in \text{Aut}(T_3)$ such that $\langle g_m \rangle^{(P_2)}$ contains finite order inversions, but does not contain an inversion of order less than $m$. 
\end{corollary}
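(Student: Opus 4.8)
The plan is to deduce Corollary~\ref{Cor:1} directly from Theorem~\ref{thm:3} by exhibiting, for each target order, an infinite order inversion $g_m$ whose restrictions to the spheres $S(e,n)$ realise the right pattern of orders. Given $m \in \N$, first I would choose $N = N(m)$ large enough that $2^N \geq m$. I then want an infinite order inversion $g_m$ of the edge $e$ of $T_3$ satisfying the hypotheses of Theorem~\ref{thm:3} with this $N$, namely $\ord(g_m\mid_{S(e,n)}) = 2^{n+1}$ for every $n < N$ together with $\ord(g_m\mid_{S(e,N)}) = 2^N$.

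The construction of such a $g_m$ is where the real work lies, but it is essentially a finite book-keeping task. Recall that $S(e,n)$ has $2^{n+1}$ vertices, so the ``maximal'' behaviour $\ord(g_m\mid_{S(e,n)}) = 2^{n+1}$ corresponds to a single $2^{n+1}$-cycle (as in a good inversion), while $\ord(g_m\mid_{S(e,N)}) = 2^N$ means $g_m$ acts on the $2^{N+1}$ vertices of $S(e,N)$ as a product of two disjoint $2^N$-cycles rather than one full cycle. I would build $g_m$ level by level as an automorphism of $T_3$: on $B(e,N-1)$ let it act exactly like a good inversion (so that all orders below level $N$ are forced to be $2^{n+1}$), and then, when extending the action to $S(e,N)$, split the single long cycle into two cycles of length $2^N$ by pairing up the children appropriately; beyond level $N$ the action can be extended arbitrarily subject only to being an automorphism of infinite order, for instance by continuing as a good inversion on each of the two halves. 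One checks that this is consistent with the tree structure — each vertex of $S(e,N-1)$ has two children in $S(e,N)$, and the splitting must respect that $g_m$ maps the children of a vertex to the children of its image — which is a routine verification analogous to the existence of a good inversion asserted before Theorem~\ref{thm:2}.

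With $g_m$ in hand, the conclusion is immediate. Since $\ord(g_m\mid_{S(e,n)}) = 2^{n+1}$ for all $n < N$, part~(1) of Theorem~\ref{thm:3} guarantees that every inversion in $\langle g_m\rangle^{(P_2)}$ has order at least $2^N \geq m$, so there is no inversion of order less than $m$. Since additionally $\ord(g_m\mid_{S(e,N)}) = 2^N$, part~(2) guarantees that $\langle g_m\rangle^{(P_2)}$ does contain an inversion, of order exactly $2^N$, hence finite order inversions exist. Together these two facts are exactly the statement of the corollary.

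I expect the only genuine obstacle to be verifying that the desired $g_m$ exists as an automorphism of $T_3$ with the prescribed cycle structure on each sphere; once the splitting at level $N$ is set up compatibly with the parent--child incidence of the tree, everything else is a direct application of the already-proved theorem. It is worth remarking that the bound obtained is a power of $2$: the corollary asks only for order at least $m$ and no smaller inversion, and our $g_m$ delivers an inversion of order exactly $2^{\lceil \log_2 m\rceil}$, which suffices.
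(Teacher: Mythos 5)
Your proposal is correct and is exactly the argument the paper intends: the corollary is stated without proof as an immediate consequence of Theorem~\ref{thm:3}, and your construction of $g_m$ (acting as a good inversion up to level $N-1$, splitting into two $2^N$-cycles on $S(e,N)$, then extended with infinite order) supplies precisely the witness satisfying the hypotheses of both parts of that theorem. Applying part~(1) and part~(2) with $2^N \geq m$ then yields the statement verbatim.
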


\subsection{A Vertex-transitive $(P_2)$-closed Group With No Finite Order Inversions}

The action of the group described in the previous section on $T_3$ has infinitely many orbits. In this section we describe a vertex transitive example of a $(P_2)$-closed group without finite order inversions. In fact, we can even show that this group has no non-trivial torsion elements.

\begin{theorem}
    There is a $(P_2)$-closed, torsion free, vertex transitive group acting on the $4$-regular tree $T_4$.
\end{theorem}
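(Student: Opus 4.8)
The plan is to produce a vertex-transitive, torsion-free group $G_0 \le \Aut(T_4)$ whose action on every $2$-ball agrees with that of a single self-similar, good-inversion-type automorphism, and then to set $G := G_0^{(P_2)}$. With such a $G_0$ in hand the first two properties are immediate: $G$ is $(P_2)$-closed by definition of the $(P_2)$-closure, and it is vertex transitive because $G \supseteq G_0$ and vertex-transitivity is clearly inherited by overgroups. Thus the entire content of the theorem is concentrated in verifying that $G$ is torsion free, and I would organise the proof so that everything reduces to excluding involutions, exactly as in Section~3.1.

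I would first reduce torsion-freeness to the absence of order-$2$ elements. Hyperbolic automorphisms of a tree have infinite order, so a non-trivial finite-order element of $\Aut(T_4)$ is either elliptic or an edge inversion. A local $2$-ball analysis in the spirit of Theorem~\ref{thm:2} shows that in $G$ the induced action on each relevant sphere lies in a cyclic $2$-group, so every finite-order element of $G$ has order a power of $2$; consequently, as in the two corollaries following Theorem~\ref{thm:2}, a non-trivial finite-order element would admit a power of order exactly $2$. It therefore suffices to prove that $G$ has no element of order $2$.

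For the construction of $G_0$ I would fix a bi-infinite geodesic axis $L = (x_i)_{i\in\Z}$ in $T_4$. Each $x_i$ has two neighbours on $L$ and two off $L$, and each off-$L$ neighbour roots a rooted ternary subtree, that is, a half-tree of the kind occurring in $T_3$. On these transverse subtrees I would install the self-similar $2^n$-cycle twisting of a good inversion, and combine it with a unit translation $t$ along $L$ together with elements carrying $L$-vertices into the transverse branches, chosen so that the resulting group acts transitively on $VT_4$; arranging the twists to have infinite order also yields infinite-order inversions, so that $G$ is a genuine vertex-transitive analogue of the group in Theorem~\ref{thm:2}. The key design requirement is that, for every vertex, the induced local action on the next sphere is a $2$-power cycle of full length, precisely as for a good inversion. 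The involution-exclusion argument then runs parallel to the proof of Theorem~\ref{thm:2}: a putative $h \in G$ of order $2$ acts as an involution on some minimal sphere while fixing the previous sphere pointwise, hence swaps two vertices $v,v'$ with a common neighbour $u$ one step closer; matching $h$ on $B(u,2)$ with an element of $G_0$ whose sphere-action is a full-length $2$-power cycle forces the matching power to fix that sphere pointwise, contradicting the swap of $v$ and $v'$.

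The main obstacle is the construction of $G_0$ itself, where two requirements pull in opposite directions. \emph{Vertex-transitivity} demands enough distinct generators and local actions to move every vertex onto every other, while the involution-exclusion argument demands \emph{rigidity}: the induced $2$-ball actions must all restrict to full-length $2$-power cycles, which severely limits the admissible local behaviour. Reconciling these — producing translation and branch-moving elements that achieve transitivity yet never introduce a local action off the prescribed $2$-power family — together with checking that the chosen generators really assemble into a well-defined automorphism of $T_4$ with the stated self-similar local structure, is where the substantive work lies; once this is done, the torsion-freeness argument is a routine adaptation of Theorem~\ref{thm:2} and Proposition~\ref{prop:1}.
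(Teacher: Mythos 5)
Your high-level strategy---take the $(P_2)$-closure of a suitable vertex-transitive group, reduce torsion-freeness to the non-existence of order-$2$ elements, and exclude involutions by a local-rigidity argument in the style of Theorem~\ref{thm:2}---is exactly the shape of the paper's argument. But there is a genuine gap at precisely the point you flag yourself: the group $G_0$ is never constructed. Everything hinges on producing a vertex-transitive $G_0 \le \Aut(T_4)$ with the property that every element of $G_0$ fixing a vertex $u$ and swapping two of its neighbours is forced to act with order $4$ on $S(u,2)$ (and similarly for elements inverting an edge); without this, the step ``matching $h$ on $B(u,2)$ with an element of $G_0$ forces a contradiction'' has nothing to match against. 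Your proposed construction---a distinguished axis $L$ with good-inversion twists on the transverse ternary branches, plus a translation and branch-moving elements---does not obviously deliver this rigidity: once you adjoin elements moving vertices of $L$ into the branches, the vertex stabilizers of the generated group consist of complicated products of conjugates of the generators, and there is no evident mechanism forcing their induced $2$-ball actions to stay inside a single cyclic $2$-power family. You acknowledge this tension (transitivity versus rigidity), but resolving it \emph{is} the theorem; as written, the proposal defers the entire substantive content.

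The paper resolves the tension by a different device, which is worth internalizing: instead of generating a group by explicit tree automorphisms, it defines $G$ as the full colour-preserving automorphism group of a decorated graph $\Sigma$, obtained by replacing each vertex of $T_4$ by a small directed gadget (a directed $4$-cycle of black vertices together with two blue vertices) and joining neighbouring gadgets by red and green arcs. Rigidity is then automatic: an automorphism stabilising a gadget must act on it as a power of the $4$-cycle rotation, so any element fixing a vertex and swapping its two ``children'' necessarily induces a permutation of order $4$ on the grandchildren, and similarly an edge inversion (of a green edge) has order $4$ on the vertices at distance $1$ from that edge; since these configurations live inside $2$-balls, the same constraints pass to the $(P_2)$-closure. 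Vertex transitivity comes from recursively building colour-preserving maps between gadgets. Torsion-freeness then follows by the reduction you intended: a non-trivial finite-order element fixes a vertex or inverts an edge, hence has even order $2k$, and $h^k$ would be an involution, which the local order-$4$ statements exclude. If you want to salvage your approach, the lesson is to define the initial group as the automorphism group of an invariant combinatorial structure on $T_4$ rather than by a generating set, so that local rigidity can be read off from the structure instead of being verified for arbitrary products of generators.
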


\begin{proof}
To define the group, first define a graph $T_4^+$ by replacing each vertex of $T_4$ by a gadget which consists of a directed 4-cycle of black vertices and two blue vertices which are attached to the first and third, and second and fourth vertices of the directed cycle, respectively, with directed arcs as shown in Figure \ref{fig:gadget}. We then attach each gadget to gadgets corresponding to the neighbouring vertices with red and green arcs in the way shown in Figure \ref{fig:gadget-attach}. Call the resulting coloured, directed graph $\Sigma$.

The group $G$ of colour preserving automorphisms of $\Sigma$ maps each gadget to a gadget because it must map black edges to black edges, and thus has a natural action on $T_4$. If we colour every edge of $T_4$ red or green according to the colour of the edges connecting the corresponding gadgets, and orient red edges accordingly, then the action of $G$ preserves both the colouring and the orientation. Let $H$ be the $(P_2)$-closure of $G$; it can be shown that $H=G$, but we do not need this fact and thus leave the proof to the interested reader.

Clearly, $H$ is $(P_2)$-closed.
Given two gadgets, it is easy to (recursively) construct an element of $G$ which maps one to the other. Hence the action of $G$ on $T_4$ is vertex transitive, and thus $H$ acts vertex transitively as well.

It only remains to show that $G$ has no elements of finite order. Define the \emph{children} of a vertex $v$ to be the two vertices which can be reached from $v$ by traversing a red arc in forward direction, and define the \emph{grandchildren} of $v$ as the children of the children of $v$. Assume that $g \in G$ fixes $v \in VT$. Note that $g$ swaps the children of $v$ if and only if $g$ (in its action on the graph  $T_4^+$) swaps the two blue vertices in the gadget corresponding to $v$. Hence, if $g$ fixes $v$ and swaps its children, then it induces a cyclic permutation of order $4$ on the black vertices in this gadget, and thus also on the blue vertices in the gadgets corresponding to the children of $v$. This implies that $g^2$ swaps the blue vertices in each gadget corresponding to a child of $v$ and thus has order $2$ on the grandchildren of $v$. It follows that $g$ induces a permutation of order $4$ on the grandchildren of $v$ in $T$. Since children and grandchildren of $v$ are contained in $B(v,2)$ and $H$ is the $(P_2)$-closure of $G$, we conclude that if an element $h \in H$ fixes $v$ and swaps its children, then $h$ must induce a permutation of order $4$ on the grandchildren of $v$ as well.

An analogous argument shows that if $h \in H$ swaps the endpoints of an edge (which must be a green edge), then it induces a cyclic permutation of order $4$ on the vertices at distance $1$ from this edge.

Let $h \in H$ be a non-trivial element of finite order in $H$. It is known \cite{halin73} that any automorphism of a tree either fixes a vertex or an edge, or acts like a translation along a double ray. Since translations have infinite order, $h$ either fixes a vertex or swaps the endpoints of an edge. In the former case, there must be at least one vertex which is fixed by $h$, but whose neighbours are not fixed pointwise; the only possibility for this is that $h$ swaps the children of this vertex. In either case, $h$ must have even order $2k$, and therefore $h^k$ has order $2$.

Finally, $h^k$ has finite order, so as before, it must either fix a vertex and swap its children, or swap the endpoints of an edge. But the above discussion shows that in both cases $h^k$ must have at least order $4$, a contradiction.
\end{proof}

\begin{figure}
    \centering
 \begin{tikzpicture}
    [scale=.7,mynode/.style={draw, circle, inner sep=2pt}]

    \node[mynode,fill=black] (1) at (1,0) {};
    \node[mynode,fill=black] (2) at (0,1) {};
    \node[mynode,fill=black] (3) at (-1,0) {};
    \node[mynode,fill=black] (4) at (0,-1) {};
    \node[mynode,fill=blue] (5) at (1,2) {};
    \node[mynode,fill=blue] (6) at (-1,2) {};

    \draw[->, thick] (1) -- (2);
    \draw[->, thick] (2) -- (3);
    \draw[->, thick] (3) -- (4);
    \draw[->, thick] (4) -- (1);

    \draw[->, thick] (6) -- (2);
    \draw[->, thick] (6) -- (4);

    \draw[->, thick] (5) -- (1);
    \draw[->, thick] (5) to [out=180,in=90] (3);

    \draw[dashed](2,0.65) arc[start angle=0, end angle=360, radius=2];
    
 \end{tikzpicture}
    \caption{The gadget with which we replace each vertex in $T_4$.}
    \label{fig:gadget}
\end{figure}
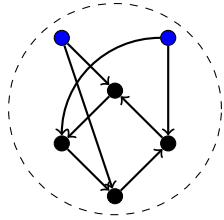

\begin{figure}
    \centering
 \begin{tikzpicture}
    [scale=.7,mynode/.style={draw, circle, inner sep=2pt}]
    
    \node[mynode,fill=black] (1) at (1,0) {};
    \node[mynode,fill=black] (2) at (0,1) {};
    \node[mynode,fill=black] (3) at (-1,0) {};
    \node[mynode,fill=black] (4) at (0,-1) {};
    \node[mynode,fill=blue] (5) at (1,2) {};
    \node[mynode,fill=blue] (6) at (-1,2) {};

    \node[mynode,fill=blue] (7) at (6,2) {};
    \node[mynode,fill=blue] (8) at (4,2) {};

    \node[mynode,fill=black] (9) at (1,4) {};
    \node[mynode,fill=black] (10) at (-1,4) {};
    \node[mynode,fill=black] (11) at (0,4.5) {};
    \node[mynode,fill=black] (12) at (0,3.5) {};

    \node[mynode,fill=blue] (13) at (1,-3) {};
    \node[mynode,fill=blue] (14) at (2.5,-2.3) {};

    \node[mynode,fill=blue] (15) at (-1,-3) {};
    \node[mynode,fill=blue] (16) at (-2.5,-2.3) {};

    \draw[->, thick] (1) -- (2);
    \draw[->, thick] (2) -- (3);
    \draw[->, thick] (3) -- (4);
    \draw[->, thick] (4) -- (1);

    \draw[->, thick] (9) -- (12);
    \draw[->, thick] (10) -- (11);
    \draw[->, thick] (11) -- (9);
    \draw[->, thick] (12) -- (10);

    \draw[->, thick] (6) -- (2);
    \draw[->, thick] (6) -- (4);

    \draw[->, thick] (5) -- (1);
    \draw[->, thick] (5) to [out=180,in=90] (3);

    \draw[->, red, thick] (2) to [out=280,in=140] (14);
    \draw[->, red, thick] (4) -- (13);

    \draw[->, red, thick] (1) to [out=250, in=40] (15);
    \draw[->, red, thick] (3) -- (16);

    \draw[->, red, thick] (9) -- (5);
    \draw[->, red, thick] (10) -- (6);

    \draw[->, green, thick] (7) to [out=150, in=30] (6);
    \draw[->, green, thick] (6) to [out=10, in=150] (8);
    \draw[->, green, thick] (8) to [out=200, in=340] (5);
    \draw[->, green, thick] (5) to [out=310, in=220] (7);

    \draw[dashed](-0.4,-3.8) arc[start angle=0, end angle=100, radius=2,rotate=40];
    \draw[dashed](2.8,-1.8) arc[start angle=80, end angle=180, radius=2,rotate=-40];
    \draw[dashed](-1.5,4) arc[start angle=220, end angle=320, radius=2];
    \draw[dashed](4,3.5) arc[start angle=135, end angle=225, radius=2];
    \draw[dashed](2,0.65) arc[start angle=0, end angle=360, radius=2];
 \end{tikzpicture}
    \caption{A gadget and its arcs to adjacent gadgets in $T_4^+$.}
    \label{fig:gadget-attach}
\end{figure}
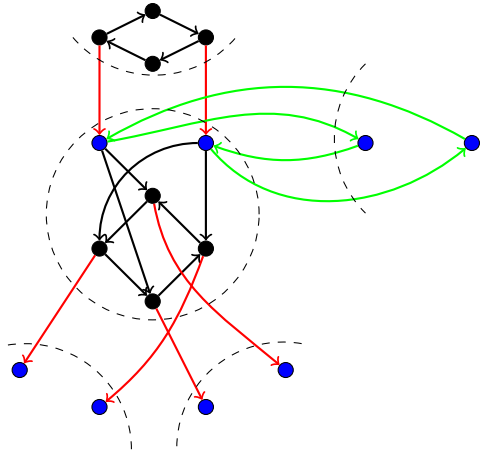

\bibliographystyle{abbrv}
\bibliography{sources.bib}

\end{document}